\numberwithin{equation}{section}
\numberwithin{figure}{section}
  \theoremstyle{plain}
  \newtheorem*{thm*}{Theorem}
\theoremstyle{plain}
\newtheorem{thm}{Theorem}
  \theoremstyle{plain}
  \newtheorem{prop}[thm]{Proposition}
  \theoremstyle{plain}
  \newtheorem{cor}[thm]{Corollary}
 \theoremstyle{definition}
  \newtheorem{example}[thm]{Example}
  \theoremstyle{remark}
  \newtheorem{rem}[thm]{Remark}
\theoremstyle{definition}
\newtheorem{parn}{}[section]
\begin{document}

\title{Locally tame plane polynomial automorphisms }

 \author{Joost Berson}
  \thanks{Funded by a free competition grant of the Netherlands Organisation for scientific research (NWO)}
 \address{Radboud University Nijmegen \\ Postbus 9010 \\ 6500 GL Nijmegen \\ The Netherlands } 
 \email{j.berson@sience.ru.nl}

\author{Adrien Dubouloz}
 \thanks{Partially supported by FABER grant 07-512-AA-010-S-179 and PHC Grant Van Gogh 18153NA}
\address{Institut de Math\'ematiques de Bourgogne \\ Universit\'e de Bourgogne \\9 avenue Alain Savary \\ BP 47870 \\ 21078 Dijon cedex \\ France} 
\email{Adrien.Dubouloz@u-bourgogne.fr}

\author{ Jean-Philippe Furter}
\thanks{Partially supported by PHC Grant Van Gogh 18153NA}
\address{Universit\'e de La Rochelle \\ Avenue Michel Cr\'epeau \\ 17000 La Rochelle, France}
\email{jpfurter@univ-lr.fr}

\author{ Stefan Maubach} 
   \thanks{Funded by Veni-grant of council for the physical sciences, Netherlands Organisation for scientific research (NWO)}
\address{Jacobs University Bremen, Campus Ring 1, 28759 Bremen, Germany}
\email{s.maubach@jacobs-university.de}

\begin{abstract}
For automorphisms of a polynomial ring in two variables over a domain
$R$, we show that local tameness implies global tameness provided
that every $2$-generated locally free $R$-module of rank $1$ is
free. We give many examples illustrating this property. 
\end{abstract}
\maketitle

\section*{Introduction }

A natural problem in commutative algebra and algebraic geometry is
to understand the group $\mbox{{\rm GA}}_{n}\left(R\right)$ of automorphisms
of a polynomial ring $R\left[X_{1},\ldots,X_{n}\right]$ over a ring
$R$. Although much progress has been made in this direction during
the last decades, one can state that only the case $n=2$ and $R$
is a field is fully understood. A central and fruitful notion in the
study of polynomial automorphisms is the notion of tameness: an automorphism
is called \emph{tame} if it can be written as a composition of affine
and triangular ones, where by a triangular automorphism, we mean an
automorphism $F=\left(F_{1},\ldots,F_{n}\right)\in\mbox{{\rm GA}}_{n}\left(R\right)$
such that $F_{i}\in R\left[X_{i},\ldots,X_{n}\right]$ for every $i=1,\ldots,n$.
Tame automorphisms form a subgroup ${\rm TA}_{n}\left(R\right)$ of
${\rm GA}_{n}\left(R\right)$ and a classical theorem due to Jung in
characteristic zero \cite{Jung} and van der Kulk in the general case
\cite{Kulk} asserts that if $R$ is a field $k$ then ${\rm GA}_{2}\left(k\right)={\rm TA}_{2}\left(k\right)$. 
The result is even more precise: ${\rm GA}_{2}\left(k\right)$ is the free product 
of the subgroups of affine and triangular automorphisms amalgamated over their intersection. 
In contrast, even the equality ${\rm TA}_{2}\left(R\right)={\rm GA}_{2}\left(R\right)$
is no longer true for a general domain $R$, as illustrated by a 
famous example due to Nagata : for an element $z\in R\setminus\{0\}$ the endomorphism 
\[F=\left(X-2Y(zx+Y^{2})-z(zX+Y^{2})^{2},Y+z(zX+Y^{2})\right)\]
of $R\left[X,Y\right]$ is in ${\rm GA}_{2}\left(R\right)$ and can be decomposed as
$F=(X-z^{-1}Y^2,Y)(X,z^2X+Y)(X+z^{-1}Y^2,Y)$ in 
${\rm GA}_{2}\left(K(R)\right)={\rm TA}_{2}\left(K(R)\right)$. 
Such a decomposition being essentially unique, this implies in particular that if $z$ is not invertible in $R$, 
then $F$ cannot be tame over $R$. Note that more generally, given a prime ideal $\mathfrak{p}\in{\rm Spec}\left(R\right)$, $F\in{\rm TA}_{2}\left(R_{\mathfrak{p}}\right)$ if and only if $z\not\in\mathfrak{p}$. 

Automorphisms $F\in{\rm GA}_{n}\left(R\right)$ such that $F\in{\rm TA}_{n}\left(R_{\mathfrak{p}}\right)$
for every $\mathfrak{p}\in{\rm Spec}\left(R\right)$ are said to be
\emph{locally tame}. Of course, every tame automorphism is locally
tame. In contrast, the Nagata automorphism is neither tame nor locally
tame. This could suggest that, at least for plane polynomial automorphisms,
tameness is a property that can be checked locally on the base ring.
In particular, one could hope that the only reason why an automorphism
$F\in{\rm GA}_{2}\left(R\right)$ is not tame is because there exists
a prime $\mathfrak{p}\in{\rm Spec}\left(R\right)$ such that $F$
is already non tame over $R_{\mathfrak{p}}$. It turns out that this
hope is too optimistic, and that in general, some ``global'' properties
of $R$ have to be taken into account to be able to infer tameness
directly from local tameness. The main result of this article is the
following characterization of rings for which global tameness can
be checked locally : 
\begin{thm*}
For a domain $R$, the following assertions are equivalent :

1) ${\displaystyle {\rm TA}_{2}\left(R\right)=\bigcap_{\mathfrak{p}\in{\rm Spec}\left(R\right)}}{\rm TA}_{2}\left(R_{\mathfrak{p}}\right)$,

2) Every $2$-generated locally free $R$-module of rank $1$ is free. 
\end{thm*}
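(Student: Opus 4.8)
The plan is to set up an explicit dictionary between $2$-generated locally free $R$-modules of rank $1$ and a family of locally tame automorphisms, and then to read tameness off from freeness by means of the essential uniqueness of the Jung--van der Kulk decomposition over $K=K(R)$. Recall that such a module $M$ is, up to isomorphism, an invertible fractional ideal $M=Ra+Rb\subseteq K$; invertibility furnishes $c,d\in M^{-1}$ with $ac+bd=1$, so that $\phi=\left(\begin{smallmatrix}a&-d\\b&c\end{smallmatrix}\right)\in{\rm SL}_2(K)$, and $M$ is free if and only if the first column $(a,b)^{\mathsf T}$ of $\phi$ is, up to a scalar in $K^{*}$, a column of a matrix in ${\rm GL}_2(R)$. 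To this data I would attach, for a suitable $t\in(M^{-1})^{3}\setminus\{0\}$, the automorphism obtained by conjugating the elementary map $\varepsilon=(X+tY^{2},Y)$ by $\phi$; a direct computation gives
\[
F\;=\;\phi\,\varepsilon\,\phi^{-1}\;=\;\bigl(X+ta(aY-bX)^{2},\;Y+tb(aY-bX)^{2}\bigr),
\]
whose coefficients lie in $t\,(a,b)^{3}\subseteq R$, so that $F\in{\rm GA}_2(R)$.

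For the implication (1)$\Rightarrow$(2) I would argue by contradiction. Starting from a non-free $M$, the automorphism $F$ above is locally tame: at every $\mathfrak p$ the ideal $(a,b)$ becomes principal, whence $\phi$ factors as an element of ${\rm SL}_2(R_{\mathfrak p})$ times a diagonal matrix, exhibiting $F$ as the conjugate of an elementary map by an affine automorphism over $R_{\mathfrak p}$. Assuming (1), $F$ would then be tame over $R$. Now $F=\phi\,\varepsilon\,\phi^{-1}$ is already a reduced word of syllable length three in the amalgam ${\rm GA}_2(K)={\rm Aff}_2(K)\ast_{B}J_2(K)$, where $B={\rm Aff}_2(K)\cap J_2(K)$ and $J_2$ is the triangular subgroup. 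By the essential uniqueness of the reduced decomposition, any tame expression of $F$ over $R$ must, after reduction, have its outer affine syllable equal to $\phi\beta$ for some $\beta\in B$. Thus $\phi=a_0\beta^{-1}$ with $a_0\in{\rm Aff}_2(R)$ and $\beta^{-1}$ upper triangular over $K$; comparing first columns of the linear parts shows that $(a,b)^{\mathsf T}$ is a scalar multiple of a column of $\mathrm{Lin}(a_0)\in{\rm GL}_2(R)$, forcing $M$ to be free --- a contradiction.

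The substantial direction is (2)$\Rightarrow$(1), which I would prove by induction on the total degree of a locally tame $F\in{\rm GA}_2(R)$. Affine automorphisms are tame by definition, settling the base case. For $\deg F\geq2$, the degree theory underlying Jung--van der Kulk shows that over $K$ the leading form of one component is a scalar multiple of a power of the leading form of the other, so that $F$ admits an elementary reduction $E\circ(\alpha F)$ of strictly smaller degree, with $\alpha$ affine over $K$. The crux is to show that the failure of this reduction to be defined over $R$ is measured precisely by a $2$-generated locally free rank-$1$ module: the affine factor $\alpha$ is, as in the first paragraph, governed by an ${\rm SL}_2(K)$ matrix whose first column generates an invertible ideal $M$ built from the leading coefficients of $F$, and local tameness of $F$ guarantees that $M$ is locally free of rank $1$. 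Hypothesis (2) then makes $M$ free, which permits $\alpha$ and $E$ to be chosen over $R$; thereupon $E\circ(\alpha F)\in{\rm GA}_2(R)$ is again locally tame of smaller degree, and the induction closes.

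The main obstacle I anticipate is exactly this last identification: verifying that the obstruction attached to each elementary reduction is a \emph{$2$-generated} rank-$1$ module, rather than one requiring more generators, and that local tameness of $F$ translates cleanly into local freeness of the associated ideal $M$. This demands a careful analysis of how the leading coefficients of the two components interact through the Jacobian condition, together with a bookkeeping of the affine normalizations ensuring that a single invertible ideal with two generators --- and not a more complicated module --- is produced at each inductive step.
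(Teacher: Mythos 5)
Your overall strategy does match the paper's: the same family of conjugates of an elementary map by an ${\rm SL}_2(K)$ matrix realizes (1)$\Rightarrow$(2), and (2)$\Rightarrow$(1) is an induction on degree driven by the Jung--van der Kulk degree theory. But the crux of (2)$\Rightarrow$(1) --- the point you yourself flag as the anticipated obstacle --- is genuinely missing from your write-up, and it is worth recording that it closes far more simply than you fear, provided you make a case distinction you have suppressed. If $d_1=\deg F_1<\deg F_2=d_2$, there is no module at all: over $K$ one has $\overline{F_2}=\alpha\overline{F_1}^{\,e}$ with $e=d_2/d_1$ and a single scalar $\alpha\in K$, and local tameness forces $\alpha\in R_{\mathfrak{p}}$ for every $\mathfrak{p}$, hence $\alpha\in\bigcap_{\mathfrak{p}}R_{\mathfrak{p}}=R$; the reduction $(X,Y-\alpha X^{e})F$ is then defined over $R$ with no appeal to hypothesis (2). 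Hypothesis (2) is needed only when $d_1=d_2$, and there the obstruction is not some ideal ``built from the leading coefficients'' whose $2$-generation requires a careful analysis of the Jacobian condition: it is literally the module $M=R\overline{F_1}+R\overline{F_2}$ spanned by the two leading forms, which is $2$-generated by construction. The key equivalence (the paper's discussion \ref{par:loc-free-obs}) is that an affine $\varphi$ over a ring $S$ lowers $\deg F$ if and only if there is a unimodular $(\alpha_1,\alpha_2)\in S^{2}$ with $\alpha_1\overline{F_1}+\alpha_2\overline{F_2}=0$, if and only if $S\overline{F_1}+S\overline{F_2}$ is free of rank $1$. Applying this over each $R_{\mathfrak{p}}$, where the reduction exists by local tameness and Proposition \ref{pro:Tame-algo}(iii), shows that $M$ is locally free of rank $1$; hypothesis (2) makes it free; and freeness hands back a unimodular relation over $R$, hence the affine reduction over $R$. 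With this identification your induction closes exactly as you intend.

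Your direction (1)$\Rightarrow$(2) is essentially correct and uses the same example as the paper, but proves non-tameness differently: you invoke the essential uniqueness of reduced words in the amalgam ${\rm Aff}_2(K)\ast_{B}J_2(K)$, whereas the paper reads the conclusion off the degree-reduction algorithm (Proposition \ref{pro:Tame-algo}): for a tame $F$ with $\deg F_1=\deg F_2>1$ the module $R\overline{F_1}+R\overline{F_2}$ must be free of rank $1$, which after cancelling $\overline{q(zX+wY)}$ says $(z,w)$ is principal. Your route is workable but needs one step you leave implicit: that reducing a word whose syllables are affine and triangular maps defined over $R$ yields a reduced word whose syllables are still defined over $R$ up to elements of $B$. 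This is true (merging adjacent syllables and absorbing $B$-elements preserves $R$-definedness), but it is exactly the content that the degree algorithm packages more cleanly, and the paper's choice makes the two directions of the theorem visibly two faces of the same statement about the module of leading forms.
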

In particular, it follows that over a unique factorization domain
$R$, tameness is a local property of automorphisms. \\

The article is organized as follows. Section one is devoted to the
proof of the above characterization, that we essentially derive from
the fact that tame automorphisms of a polynomial ring in two variables
can be recognized algorithmically. In section two, we consider many
examples that illustrate condition 2) in the Theorem above.

\section{From Local tameness to global tameness}

In this section, we characterize domains $R$ with the property that
an automorphism $F=\left(F_{1},F_{2}\right)\in{\rm GA}_{2}\left(R\right)$
is tame if and only if it is locally tame. 

\begin{parn} \textbf{Notations.} For an automorphism $F=\left(F_{1},F_{2}\right)\in{\rm GA}_{2}\left(R\right)$,
we let $\deg F=\left(\deg F_{1},\deg F_{2}\right)\in(\mathbb{N}^*)^{2}$
considered as equipped with the product order. We denote by $\overline{F_{i}}$
the homogeneous component of $F_{i}$ of degree $\deg F_{i}$, $i=1,2$.
An automorphism with $\deg F=\left(1,1\right)$ is affine, and we
denote by ${\rm Aff}_{2}\left(R\right)$ the corresponding subgroup
of ${\rm GA}_{2}\left(R\right)$. 

\end{parn}

\subsection{Properties of automorphisms}

\indent\newline\indent Even if the equality ${\rm GA}_{2}\left(R\right)={\rm TA}_{2}\left(R\right)$
is no longer true for a general domain $R$, tame automorphisms of
a polynomial ring in two variables can be recognized algorithmically.
Indeed, the following result quoted from \cite[Prop. 1]{Fur} (see
also \cite[Cor. 5.1.6]{Essenbook}) says in essence that for every
$F\in{\rm TA}_{2}\left(R\right)$ with $\deg F>\left(1,1\right)$
there exists a linear or a triangular automorphism $\varphi$ such that
$\deg\varphi F<\deg F$. 
\begin{prop}
\label{pro:Tame-algo}Let $F=\left(F_{1},F_{2}\right)\in{\rm TA}_{2}\left(R\right)$
and let $\left(d_{1},d_{2}\right)=\deg F$. Then the following holds:

a) $d_{1}\mid d_{2}$ or $d_{2}\mid d_{1}$.

b) If $\max\left(d_{1},d_{2}\right)>1$ then we have: 

$\qquad$ (i) If $d_{1}<d_{2}$ then $\overline{F_{2}}=c\overline{F_{1}}^{d_{2}/d_{1}}$
for some $c\in R$,

$\qquad$ (ii) If $d_{2}<d_{1}$ then $\overline{F_{1}}=c\overline{F_{2}}^{d_{1}/d_{2}}$
for some $c\in R$,

$\qquad$ (iii) If $d_{1}=d_{2}$ then there exists $\varphi\in{\rm Aff}_{2}(R)$
such that $\varphi F=\left(F_{1}',F_{2}'\right)$ satisfies $\deg F_{1}'=d_{1}$
and $\deg F_{2}'<d_{1}$. 
\end{prop}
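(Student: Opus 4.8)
The plan is to deduce everything from the field case. Writing $K$ for the fraction field of $R$, the inclusion $R[X,Y]\hookrightarrow K[X,Y]$ gives ${\rm TA}_{2}(R)\subseteq{\rm TA}_{2}(K)={\rm GA}_{2}(K)$, and since $R$ is a domain the leading coefficients of the $F_{i}$ remain nonzero in $K$, so that $\deg F$ and the forms $\overline{F_{i}}$ are the same whether $F$ is viewed over $R$ or over $K$. Over $K$ I would invoke the Jung--van der Kulk theorem in its precise form quoted in the introduction: ${\rm GA}_{2}(K)$ is the free product of ${\rm Aff}_{2}(K)$ and the triangular subgroup amalgamated over their intersection. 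Thus $F$ admits a reduced expression as an alternating composition of affine automorphisms and proper (degree $>1$) triangular ones, and I would argue by induction on the number of proper triangular factors. The base case $F\in{\rm Aff}_{2}(K)$ gives $\deg F=(1,1)$, for which (a) holds trivially and (b) is empty.

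For the inductive step I would peel off the outermost factor $g$ of a reduced expression $F=g\circ G$ and track leading forms, using that $G$ is a shorter reduced word to which the induction hypothesis applies. If $g$ is affine it replaces $(G_{1},G_{2})$ by invertible linear combinations: either the two resulting leading forms keep distinct degrees, in which case the power relation already known for $G$ is transported verbatim to $F$ (cases (i),(ii)), or they are brought to a common top degree $\deg F_{1}=\deg F_{2}$, which is exactly case (iii) and is resolved by undoing $g$ and, if needed, swapping coordinates to produce the required $\varphi$. If instead $g=(X,Y+q(X))$ is proper triangular with $\deg q=e$, then $F=(G_{1},G_{2}+q(G_{1}))$, and provided the top term $c\,\overline{G_{1}}^{\,e}$ of $q(G_{1})$, with $c$ the leading coefficient of $q$, is not cancelled, one obtains $\deg F_{2}=e\deg F_{1}$ and $\overline{F_{2}}=c\,\overline{F_{1}}^{\,e}$; this is case (i), and the symmetric factor $(X+p(Y),Y)$ yields (ii). Assembling these gives the divisibility in (a) and the power relations in (b).

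The hard part will be the non-cancellation just invoked: I must show that a reduced word genuinely realizes the predicted strictly increasing degrees, so that the new dominant term $c\,\overline{G_{1}}^{\,e}$ is never annihilated against $G_{2}$. This is the core of the Jung--van der Kulk mechanism, and I would carry it as an induction invariant, namely that in a reduced word the two leading forms never stand in a proportion that a single further elementary could cancel, which forces the strict domination $e\deg G_{1}>\deg G_{2}$ at each proper triangular step. A second, more arithmetic, issue is integrality: a priori the reduced expression and the scalar $c$ live over $K$, whereas (b)(i),(ii) assert $c\in R$ and (iii) asks for $\varphi\in{\rm Aff}_{2}(R)$. Here I would use the standing hypothesis that $F$ is already tame \emph{over} $R$, so that the whole leading-form analysis may be run on an $R$-tame factorization, $R$ being a domain. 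Concretely, in (i),(ii) the scalar $c$ is read off as the leading coefficient of an $R$-triangular factor and hence lies in $R$, while in (iii) the proportionality $\overline{F_{2}}=\lambda\overline{F_{1}}$ arises from an $R$-affine reduction, so $\lambda$ can be written with comaximal numerator and denominator in $R$, which is exactly what places the reducing matrix in ${\rm GL}_{2}(R)$.
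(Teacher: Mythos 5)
First, note that the paper does not actually prove this proposition: it is quoted from \cite[Prop. 1]{Fur} (see also \cite[Cor. 5.1.6]{Essenbook}), so the only in-paper ``proof'' is a citation. Your outline is indeed the standard argument underlying those references --- factor $F$ into an alternating word in affine and proper triangular automorphisms, peel the outermost syllable, and track leading forms --- so the question is only whether your sketch actually closes.

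It does not, and the gap sits exactly where you place ``the hard part''. The entire mathematical content of the proposition is the degree and leading-form control along a reduced word, and you only name it: the invariant you propose to carry (``the two leading forms never stand in a proportion that a single further elementary could cancel'') is too imprecise to run the induction. To exclude cancellation when applying $\left(X,Y+q(X)\right)$ with $\deg q=e\geq2$ to $G=\left(G_{1},G_{2}\right)$ you need the quantitative statement $e\deg G_{1}>\deg G_{2}$, and to propagate that through the next affine syllable you must also record \emph{what} the leading forms are --- in the classical treatment, that both are scalar multiples of powers of a single linear form and that $\deg F$ is the product of the degrees of the triangular syllables. Without stating and verifying this stronger invariant, your cases (i)--(iii) are assertions rather than consequences; with it, they follow by the bookkeeping you describe. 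A second, smaller but genuine, gap is the passage from ``$F\in{\rm TA}_{2}\left(R\right)$'' to ``$F$ admits a \emph{reduced} alternating factorization all of whose syllables lie in ${\rm Aff}_{2}\left(R\right)$ or in the $R$-triangular subgroup''. An arbitrary $R$-factorization need not be reduced over $K$, and reducing it involves merging and cancelling syllables; one must check this can be done without leaving $R$ (it can, because an $R$-affine automorphism that is triangular over $K$ is already triangular over $R$, so offending syllables lie in the $R$-intersection and can be absorbed into a neighbour). This step is precisely what yields $c\in R$ in (i),(ii) and $\varphi\in{\rm Aff}_{2}\left(R\right)$ in (iii) rather than merely their analogues over $K$, so it cannot be waved away as you do in the final sentences. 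With these two points supplied, your argument becomes the proof of \cite[Cor. 5.1.6]{Essenbook}.
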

\begin{parn} \label{par:loc-free-obs}In contrast to the tame case,
for an arbitrary automorphism $F=\left(F_{1},F_{2}\right)\in{\rm GA}_{2}\left(R\right)$
with $\deg F_{1}=\deg F_{2}$ there is no guarantee in general that
there exists $\varphi\in{\rm Aff}_{2}\left(R\right)$ such that $\deg\varphi F<\deg F$.
Indeed, such a $\varphi$ exists if and only there exists a unimodular
vector $\left(\alpha_{1},\alpha_{2}\right)\in R^{2}$ such that $\alpha_{1}\overline{F_{1}}+\alpha_{2}\overline{F_{2}}=0$,
which is the case if and only if the $R$-module $R\overline{F_{1}}+R\overline{F_{2}}$
is free of rank $1$. Combined with \cite[Ex. 6 p. 94]{Essenbook},
this observation leads to a natural procedure to construct families
of locally tame but not (globally) tame automorphisms, namely: 

\end{parn}
\begin{prop}
\label{pro:Can-Ex}If $z,w\in R$ and $q\left(T\right)\in R\left[T\right]$
is a polynomial of degree at least $2$, then \begin{eqnarray*}
F & := & \left(X+wq\left(zX+wY\right),Y-zq\left(zX+wY\right)\right)\end{eqnarray*}
is an element of ${\rm GA}_{2}\left(R\right)$. Furthermore $F$ is
tame if and only if $\left(z,w\right)$ is a principal ideal of $R$. 

In particular, if $\left(z,w\right)$ is a locally principal but not
principal ideal, then $F$ is a locally tame but not globally tame
automorphism. \end{prop}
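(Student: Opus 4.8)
My plan is to treat in turn the three claims: that $F$ is an automorphism, that principality of $(z,w)$ implies tameness, and that tameness implies principality, the last being the substantial one. For the first, I would record that the linear form $L=zX+wY$ is $F$-invariant: indeed $zF_1+wF_2=z(X+wq(L))+w(Y-zq(L))=zX+wY=L$, and hence $q(L)$ is $F$-invariant as well. The same invariance shows that $G=(X-wq(L),\,Y+zq(L))$ is a two-sided inverse of $F$: substituting the components of $F$ for $(X,Y)$ in $G$ returns $(X,Y)$ because $q(L)$ is unchanged, and symmetrically in the other order. Thus $F\in\mathrm{GA}_2(R)$.

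Next, for the implication that a principal $(z,w)$ yields a tame $F$, I would argue as follows. If $(z,w)=(0)$ then $z=w=0$ and $F=\mathrm{id}$ is tame, so I may assume $(z,w)=(d)$ with $d\neq0$. Writing $z=da$ and $w=db$, a relation $d=\alpha z+\beta w$ gives $\alpha a+\beta b=1$ after cancelling $d$ (here I use that $R$ is a domain), so $(a,b)$ is unimodular. Completing it to a matrix in $\mathrm{SL}_2(R)$ with second row $(c,e)$, so that $ae-bc=1$, I pass to the coordinates $U=aX+bY$ and $V=cX+eY$, an affine change of variables. A direct computation then gives that $U$ is fixed by $F$ while $V$ is sent to $V-d\,q(dU)$, using $cb-ea=-1$; so in the $(U,V)$-coordinates $F$ is the triangular automorphism fixing $U$ and sending $V$ to $V-d\,q(dU)$. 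Being conjugate by an affine automorphism to a triangular one, $F$ is tame.

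The converse is the heart of the matter, and here I would invoke Proposition \ref{pro:Tame-algo} together with the observation in \ref{par:loc-free-obs}. Since $(z,w)$ is principal whenever $z=0$ or $w=0$, I may assume $z,w\neq0$, and I set $n=\deg q\geq2$ with leading coefficient $q_n$. Because $R$ is a domain and $z,w$ are nonzero, $(zX+wY)^n$ is a nonzero form of degree $n$, whence $\deg F=(n,n)$ with leading forms $\overline{F_1}=q_n\,w\,(zX+wY)^n$ and $\overline{F_2}=-q_n\,z\,(zX+wY)^n$. Assuming $F$ tame, since $\max(n,n)=n>1$ and $d_1=d_2$, part b)(iii) of Proposition \ref{pro:Tame-algo} furnishes an affine $\varphi$ with $\deg\varphi F<\deg F$; by \ref{par:loc-free-obs} this is equivalent to the module $M=R\overline{F_1}+R\overline{F_2}$ being free of rank $1$. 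The key computation is that, setting $P=q_n(zX+wY)^n\neq0$, one has $M=(Rz+Rw)P$, and multiplication by $P$ being injective on the domain $R[X,Y]$ yields an $R$-module isomorphism $M\cong(z,w)$. Thus $(z,w)$ is free of rank $1$; since a nonzero ideal of a domain is a free module if and only if it is principal, $(z,w)$ is principal. I expect the main obstacle to be precisely this bookkeeping identifying $M$ with the ideal $(z,w)$, and the clean translation, via \ref{par:loc-free-obs}, between the degree-lowering step of the tameness algorithm and freeness of $M$.

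Finally, for the last assertion I would simply apply the established equivalence over each local ring. For $\mathfrak{p}\in\mathrm{Spec}(R)$ the ring $R_\mathfrak{p}$ is again a domain, $q$ still has degree $n\geq2$ over it, and the same $F$ is tame over $R_\mathfrak{p}$ if and only if $(z,w)R_\mathfrak{p}$ is principal. Hence if $(z,w)$ is locally principal then $F$ is locally tame, while if $(z,w)$ is not principal then $F$ is not tame over $R$; this produces a locally tame but not globally tame automorphism.
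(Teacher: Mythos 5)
Your proof is correct and follows essentially the same route as the paper: a direct verification of the inverse via the $F$-invariance of $zX+wY$, conjugation by an affine (${\rm SL}_2$) change of coordinates to a triangular map for the ``principal $\Rightarrow$ tame'' direction, and Proposition \ref{pro:Tame-algo} combined with the observation in \ref{par:loc-free-obs} plus cancellation of the common leading form for the converse. Your only deviations are cosmetic: you absorb the generator $d$ directly into the coordinate change instead of rescaling $q$, $z$, $w$ as the paper does, and you spell out the degenerate cases $z=0$, $w=0$ explicitly.
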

\begin{proof}
A straightforward verification shows that $G=\left(X-wq\left(zX+wY\right),Y+zq\left(zX+wY\right)\right)$
is an inverse for $F$. 
Suppose that $\left(z,w\right)=aR$ for some $a\in R$. Replacing
$q\left(T\right)$, $z$ and $w$ by $aq\left(aT\right),a^{-1}z$
and $a^{-1}w$ respectively, we may assume that $\left(z,w\right)=R$.
But then if we take any $\varphi\in{\rm SL}_{2}\left(R\right)$ having
$zX+wY$ as its first component, one checks that $F=\varphi^{-1}\left(X,Y-q\left(X\right)\right)\varphi\in TA_{2}\left(R\right)$.
Conversely, if $F\in{\rm TA}_{2}\left(R\right)$, then, since $\deg F_{1}=\deg F_{2}=\deg Q>1$,
it follows from Proposition \ref{pro:Tame-algo} and the above discussion
that the $R$-module generated by $\overline{F_{1}}=w\overline{q\left(zX+wY\right)}$
and $\overline{F_{2}}=-z\overline{q\left(zX+wY\right)}$ is free of
rank $1$. Simplifying by $\overline{q\left(zX+wY\right)}$, we get
that the $R$-module generated by $w$ and $-z$ is free a rank $1$,
i.e., $\left(w,-z\right)$ is a principal ideal. 
\end{proof}
\begin{parn} It follows that locally tame but not globally tame automorphisms
abound : for instance, in the proposition above, one can take for $R$
the coordinate ring of a smooth non rational affine curve $C$ and
for $z,w$ a pair of generators of the defining ideal of a non principal
Weil divisor on $C$ (see also section 2 below for more examples). 

\end{parn}

\subsection{A criterion}

\indent\newline\indent It turns out that the examples discussed above
illustrate the only global obstruction to infer global tameness from
local tameness, namely, the existence of $2$-generated locally free
but not globally free modules of rank $1$. Indeed, we have the following
criterion.
\begin{thm}
\label{thm:Main}For a domain $R$, the following assertions are equivalent:

1) ${\displaystyle {\rm TA}_{2}\left(R\right)=\bigcap_{\mathfrak{p}\in{\rm Spec}\left(R\right)}}{\rm TA}_{2}\left(R_{\mathfrak{p}}\right)$,

2) Every $2$-generated locally free $R$-module of rank $1$ is free. 
\begin{proof}
1) $\Rightarrow$ 2). Since $R$ is a domain, every locally free $R$-module
of rank $1$ is isomorphic to an $R$-submodule of the field of fractions
$K\left(R\right)$ of $R$ (see e.g. \cite[Prop. 6.15]{Ha}). In turn,
every such submodule is isomorphic to an ideal of $R$. In particular,
if there exists a locally free but non free $2$-generated $R$-module
of rank $1$, then there exists locally principal but not principal
ideal $\left(z,w\right)$ of $R$. But then any $F\in{\rm GA}_{2}\left(R\right)$
as in Proposition \ref{pro:Can-Ex} above is locally tame but not
tame. 

2) $\Rightarrow$ 1). Conversely, for any domain $R$, it is clear that \[
{\rm TA}_{2}\left(R\right)\subseteq\bigcap_{\mathfrak{p}\in{\rm Spec}\left(R\right)}{\rm TA}_{2}\left(R_{\mathfrak{p}}\right)\subseteq\bigcap_{\mathfrak{p}\in{\rm Spec}\left(R\right)}{\rm GA}_{2}\left(R_{\mathfrak{p}}\right)={\rm GA}_{2}\left(R\right).\]
 Let $F=\left(F_{1},F_{2}\right)\in{\rm GA}_{2}\left(R\right)$ be
a locally tame automorphism and let $d_{i}=\deg F_{i}$, $i=1,2$.
We may assume that $d_{1}\leq d_{2}$. If $d_{1}=d_{2}=1$ then $F$
is affine, whence tame. We now proceed by induction on $\left(d_{1},d_{2}\right)$,
assuming that every locally tame automorphism of degree $\left(d_{1}',d_{2}'\right)<\left(d_{1},d_{2}\right)$
is globally tame. 

$\bullet$ Case $1$ : $d_{1}<d_{2}$. Since $F\in{\rm TA}_{2}\left(R_{\left(0\right)}\right)={\rm TA}_{2}\left(K\left(R\right)\right)$,
it follows from Proposition \ref{pro:Tame-algo} that $e=d_{2}/d_{1}\in\mathbb{N}^{*}$
and that there exists $\alpha\in K\left(R\right)$ such that $\overline{F_{2}}=\alpha\overline{F_{1}}^{e}$.
But since $F\in{\rm TA}_{2}\left(R_{\mathfrak{p}}\right)$ for every
$\mathfrak{p}\in{\rm Spec}\left(R\right)$, it follows that \[
\alpha\in{\displaystyle \bigcap_{\mathfrak{p}\in{\rm Spec}\left(R\right)}R_{\mathfrak{p}}=R}.\]
 Now, the automorphism $\left(X,Y-\alpha X^{e}\right)F$ satisfies the induction hypothesis
and we are done with case.

$\bullet$ Case $2$ : $d_{1}=d_{2}$. Since for any $\mathfrak{p}\in{\rm Spec}\left(R\right)$,
we have $F\in{\rm TA}_{2}\left(R_{\mathfrak{p}}\right)$, it follows
from Proposition \ref{pro:Tame-algo} and the discussion \ref{par:loc-free-obs}
that for every $\mathfrak{p}\in{\rm Spec}\left(R\right)$, the $R_{\mathfrak{p}}$
module generated by $\overline{F_{1}}$ and $\overline{F_{2}}$ is
free of rank $1$. This means exactly that the $R$-module generated
by $\overline{F_{1}}$ and $\overline{F_{2}}$ is locally free of
rank $1$. Our assumption implies that it is globally free, and so,
we deduce from \ref{par:loc-free-obs} that there exist $\varphi\in{\rm Aff}_{2}\left(R\right)$
such that $\deg\varphi F<\deg F$. 
\end{proof}
\end{thm}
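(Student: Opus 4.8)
The plan is to prove the two implications separately, with the bulk of the work concentrated in $2)\Rightarrow 1)$. Throughout, the engine is the degree-reduction dichotomy of Proposition \ref{pro:Tame-algo} together with its linear-algebra reformulation recorded in \ref{par:loc-free-obs}, and the guiding principle is that composing with a tame automorphism preserves local tameness, so one may always try to reduce degree.

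For $1)\Rightarrow 2)$ I would argue by contraposition. Suppose some $2$-generated locally free $R$-module of rank $1$ fails to be free. Using that over a domain every rank-$1$ locally free module embeds as an ideal of $R$, I would extract from it a $2$-generated ideal $\left(z,w\right)$ that is locally principal but not principal. Feeding this pair into Proposition \ref{pro:Can-Ex} with any $q$ of degree $\geq 2$ produces an automorphism $F$ that is tame over every $R_\mathfrak{p}$ (because $\left(z,w\right)$ becomes principal in each localization) yet is not tame over $R$; thus $F$ lies in the intersection $\bigcap_\mathfrak{p}{\rm TA}_2(R_\mathfrak{p})$ but not in ${\rm TA}_2(R)$, contradicting $1)$.

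For $2)\Rightarrow 1)$, after recording the trivial inclusions ${\rm TA}_2(R)\subseteq\bigcap_\mathfrak{p}{\rm TA}_2(R_\mathfrak{p})\subseteq{\rm GA}_2(R)$, I would take a locally tame $F=\left(F_1,F_2\right)$ and induct on $\deg F=\left(d_1,d_2\right)$ for the (well-founded) product order, assuming $d_1\leq d_2$. The base case $d_1=d_2=1$ is affine, hence tame. In the inductive step I aim to produce a tame automorphism $\varphi$ with $\deg(\varphi F)<\deg F$: being tame over every $R_\mathfrak{p}$, such a $\varphi$ keeps $\varphi F$ locally tame, now of strictly smaller degree, so the induction hypothesis gives $\varphi F\in{\rm TA}_2(R)$ and hence $F=\varphi^{-1}(\varphi F)\in{\rm TA}_2(R)$. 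To build $\varphi$ I distinguish two cases. If $d_1<d_2$, Proposition \ref{pro:Tame-algo} applied over $K(R)$ yields $e=d_2/d_1\in\mathbb{N}^*$ and a relation $\overline{F_2}=\alpha\overline{F_1}^{\,e}$ with $\alpha\in K(R)$; applying it over each $R_\mathfrak{p}$ gives the same relation with coefficient in $R_\mathfrak{p}$, and the essential uniqueness of this coefficient forces $\alpha\in\bigcap_\mathfrak{p}R_\mathfrak{p}=R$, whence $\varphi=\left(X,Y-\alpha X^e\right)$ does the job. If $d_1=d_2$, Proposition \ref{pro:Tame-algo} combined with \ref{par:loc-free-obs} shows that $R_\mathfrak{p}\overline{F_1}+R_\mathfrak{p}\overline{F_2}$ is free of rank $1$ for every $\mathfrak{p}$, which is exactly to say that the $2$-generated $R$-module $R\overline{F_1}+R\overline{F_2}$ is locally free of rank $1$; hypothesis $2)$ upgrades this to global freeness, and \ref{par:loc-free-obs} then hands back the desired $\varphi\in{\rm Aff}_2(R)$.

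The main obstacle is the case $d_1=d_2$ of $2)\Rightarrow 1)$: one must correctly identify the \emph{local} freeness of the leading-coefficient module at every prime (read off from the local instances of Proposition \ref{pro:Tame-algo} via \ref{par:loc-free-obs}) with the statement that $R\overline{F_1}+R\overline{F_2}$ is locally free of rank $1$, and confirm that it is genuinely $2$-generated so that hypothesis $2)$ applies verbatim. A secondary subtlety is the descent step when $d_1<d_2$: one has to check that the coefficient $\alpha$ obtained over $K(R)$ coincides with the coefficients obtained over the various $R_\mathfrak{p}$ — which follows from the uniqueness of the relation $\overline{F_2}=\alpha\overline{F_1}^{\,e}$ — before concluding $\alpha\in R$.
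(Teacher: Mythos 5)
Your proposal is correct and follows essentially the same route as the paper's own proof: contraposition via Proposition \ref{pro:Can-Ex} for $1)\Rightarrow 2)$, and induction on $\deg F$ with the same two cases (coefficient descent $\alpha\in\bigcap_{\mathfrak{p}}R_{\mathfrak{p}}=R$ when $d_{1}<d_{2}$, and local-to-global freeness of $R\overline{F_{1}}+R\overline{F_{2}}$ when $d_{1}=d_{2}$) for $2)\Rightarrow 1)$. The subtleties you flag (uniqueness of $\alpha$, and the leading-form module being $2$-generated) are exactly the points the paper relies on implicitly, so nothing is missing.
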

\begin{parn} Recall that the \emph{Picard group} of a ring $R$ is
the group ${\rm Pic}\left(R\right)$ of isomorphy classes of locally
free $R$-modules of rank $1$. In view of the above criterion, it
is natural to introduce the subgroup ${\rm Pic}_{2}\left(R\right)$
of ${\rm Pic}\left(R\right)$ generated by isomorphy classes of locally
free $R$-modules of rank $1$ that can be generated by $2$
elements. With this definition, property 2) in Theorem \ref{thm:Main}
is equivalent to the triviality of ${\rm Pic}_{2}\left(R\right)$.
In particular, we obtain: 

\end{parn}
\begin{cor}
If ${\rm Pic}_{2}\left(R\right)=\left\{ 1\right\} $ and $F$ belongs
to ${\rm GA}_{2}\left(R\right)$, then $F$ is tame if and only if
it is locally tame. \end{cor}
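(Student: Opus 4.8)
The plan is to deduce the corollary directly from Theorem \ref{thm:Main}, the only substantive point being to verify that the hypothesis ${\rm Pic}_2(R)=\{1\}$ is exactly condition 2) of that theorem, after which the conclusion is a matter of unwinding definitions.

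First I would recall that, by definition, ${\rm Pic}_2(R)$ is the subgroup of ${\rm Pic}(R)$ generated by the isomorphy classes $[M]$ of those locally free $R$-modules $M$ of rank $1$ that admit a generating set consisting of two elements. With this in mind, I would establish the equivalence between ${\rm Pic}_2(R)=\{1\}$ and condition 2). The implication from 2) to triviality is immediate: if every $2$-generated locally free rank-$1$ module is free, then each generating class $[M]$ is the identity of ${\rm Pic}(R)$, so the subgroup they generate is trivial.

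The converse is the step requiring (minor) care. Suppose ${\rm Pic}_2(R)=\{1\}$, and let $M$ be any $2$-generated locally free $R$-module of rank $1$. By definition $[M]$ is one of the \emph{generators} of ${\rm Pic}_2(R)$, hence $[M]\in{\rm Pic}_2(R)=\{1\}$, so $[M]$ is the trivial class. Since $R$ is a domain, the trivial class is represented only by $R$ itself, so $M\cong R$ is free. This is precisely condition 2). The point to keep in mind here is that the triviality of the whole \emph{subgroup generated by} the $2$-generated classes forces each such class individually to be trivial, because every one of them is itself a generator rather than merely a product of generators and their inverses.

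Finally, having identified the hypothesis with condition 2), I would invoke Theorem \ref{thm:Main} to obtain condition 1), namely ${\rm TA}_2(R)=\bigcap_{\mathfrak{p}\in{\rm Spec}(R)}{\rm TA}_2(R_\mathfrak{p})$. Unravelling the definition of local tameness, this equality says exactly that $F\in{\rm GA}_2(R)$ is tame if and only if $F\in{\rm TA}_2(R_\mathfrak{p})$ for every $\mathfrak{p}$, i.e.\ if and only if $F$ is locally tame, which is the assertion of the corollary. I do not anticipate a genuine obstacle: the argument is essentially a restatement of the theorem, and the only subtlety is the one just flagged in the converse direction.
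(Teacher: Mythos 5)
Your proposal is correct and follows exactly the route the paper intends: the corollary is an immediate consequence of Theorem \ref{thm:Main} once one observes (as the paper does in the paragraph introducing ${\rm Pic}_{2}\left(R\right)$) that the triviality of ${\rm Pic}_{2}\left(R\right)$ is equivalent to condition 2). Your extra care in noting that each $2$-generated class is itself a generator of the subgroup, hence individually trivial, is a valid and welcome clarification of that equivalence.
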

\begin{example}
The class of rings with ${\rm Pic}_{2}\left(R\right)=\left\{ 1\right\} $
contains in particular unique factorization domains since for these
domains the Picard group itself is trivial. This also holds for \emph{B\'ezout rings}, 
that is, domains in which every finitely generated ideal is principal (see e.g. \cite{Cohn}). 
\end{example}

\subsection{Minimal overring for tameness}

\indent\newline\indent Recall that ${\rm GA}_{2}\left(R\right)={\rm TA}_{2}\left(R\right)$
if and only if $R$ is a field \cite[Proposition 5.1.9]{Essenbook}.
If $F\in{\rm GA}_{2}\left(R\right)$, then $F$ is tame over the field
of fractions $K$ of $R$, but, in general, there does not exist a
smallest ring $S$ between $R$ and $K$ such that $F$ is tame over
$S$. Indeed, letting $R=\mathbb{C}\left[z,w\right]$ every automorphism
$F$ as in Proposition \ref{pro:Can-Ex} is tame over $R\left[z^{-1}\right]$
and $R\left[w^{-1}\right]$ but not over $R=R\left[z^{-1}\right]\cap R\left[w^{-1}\right]$.
However, if we further assume that $R$ is a B\'ezout domain, we
have the following result. 
\begin{prop}
Let $R$ be a B\'ezout domain and let $\left(R_{j}\right)_{j\in J}$
be a family of rings between $R$ and $K$ such that $R={\displaystyle \bigcap_{j\in J}}R_{j}$.
Then ${\rm TA}_{2}\left(R\right)={\displaystyle \bigcap_{j\in J}}{\rm TA}_{2}\left(R_{j}\right)$. \end{prop}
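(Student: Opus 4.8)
The plan is to reuse the degree-lowering induction from the proof of Theorem \ref{thm:Main}, replacing the hypothesis ``locally free of rank $1$ $\Rightarrow$ free'' by the defining property of a B\'ezout domain, namely that every finitely generated ideal is principal. The inclusion ${\rm TA}_2(R) \subseteq \bigcap_{j\in J} {\rm TA}_2(R_j)$ is immediate, since $R \subseteq R_j$ carries affine and triangular factors over $R$ to such factors over $R_j$. For the reverse inclusion, I would first observe that any $F \in \bigcap_j {\rm TA}_2(R_j)$ lies in ${\rm GA}_2(R)$: both $F$ and $F^{-1}$ have coefficients in every $R_j$, hence in $\bigcap_j R_j = R$. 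Writing $(d_1,d_2)=\deg F$ with $d_1\le d_2$, I would then induct on $\deg F$ for the product order, the base case $d_1=d_2=1$ being affine and hence tame.

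In the unequal-degree case $d_1 < d_2$, since $F \in {\rm TA}_2(K) = {\rm GA}_2(K)$, Proposition \ref{pro:Tame-algo} yields $e = d_2/d_1 \in \mathbb{N}^{*}$ and a scalar $\alpha \in K$ with $\overline{F_2} = \alpha\overline{F_1}^{e}$. Applying the same proposition over each $R_j$, where $F$ is tame, produces $c_j \in R_j$ with $\overline{F_2} = c_j\overline{F_1}^{e}$; since $\overline{F_1}^{e}\neq 0$, uniqueness forces $c_j = \alpha$, so $\alpha \in \bigcap_j R_j = R$. The automorphism $(X,\, Y - \alpha X^{e})F$ then lies in every ${\rm TA}_2(R_j)$ and has strictly smaller degree, and the induction hypothesis closes the case.

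The equal-degree case $d_1 = d_2 > 1$ is where the B\'ezout hypothesis does its work. Tameness of $F$ over $K$ together with Proposition \ref{pro:Tame-algo} and the observation \ref{par:loc-free-obs} forces $\overline{F_1}$ and $\overline{F_2}$ to be $K$-proportional, say $\overline{F_2} = \lambda\overline{F_1}$ with $\lambda \in K^{*}$. Consequently the $R$-module $R\overline{F_1} + R\overline{F_2}$ is isomorphic to the fractional ideal $R + R\lambda = q^{-1}(p,q)$, where $\lambda = p/q$ with $p,q \in R$; since $(p,q)$ is principal in the B\'ezout domain $R$, this module is free of rank $1$. By \ref{par:loc-free-obs} there is $\varphi \in {\rm Aff}_2(R)$ with $\deg\varphi F < \deg F$, and $\varphi F$ again belongs to every ${\rm TA}_2(R_j)$, so the induction hypothesis yields $\varphi F \in {\rm TA}_2(R)$ and hence $F = \varphi^{-1}(\varphi F) \in {\rm TA}_2(R)$.

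I expect the only genuinely delicate point to be the passage $\alpha \in R$ in the unequal-degree case: this is exactly where the intersection hypothesis $R = \bigcap_j R_j$ enters, and it hinges on the uniqueness of the leading coefficient in Proposition \ref{pro:Tame-algo}(b)(i). By contrast, the equal-degree case needs nothing about the $R_j$ beyond $F \in {\rm GA}_2(R)$, the freeness of $R + R\lambda$ being handed to us directly by the B\'ezout property. The main conceptual content is therefore the recognition that over a B\'ezout domain the obstruction module $R\overline{F_1}+R\overline{F_2}$ of \ref{par:loc-free-obs} is automatically free, so that the family $(R_j)_j$ is only needed to control the partial scalar $\alpha$ produced when $\deg F_1 \neq \deg F_2$.
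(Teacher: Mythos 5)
Your proof is correct and follows essentially the same route as the paper: the same induction on $\deg F$, recovering $\alpha\in R=\bigcap_j R_j$ from the leading-coefficient relation in the unequal-degree case, and using the B\'ezout property to make $R\overline{F_1}+R\overline{F_2}$ free in the equal-degree case so that \ref{par:loc-free-obs} applies. The extra details you supply (membership of $F$ in ${\rm GA}_2(R)$, the identification with the fractional ideal $q^{-1}(p,q)$) are only implicit in the paper but are consistent with its argument.
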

\begin{proof}
Similarly as in the proof of Theorem \ref{thm:Main}, we proceed by
induction on the degree of $F=\left(F_{1},F_{2}\right)\in{\rm GA}_{2}\left(R\right)\cap{\displaystyle \bigcap_{j\in J}}{\rm TA}_{2}\left(R_{j}\right)$,
the case $\deg F=\left(1,1\right)$ being obvious. Letting $d_{i}=\deg F_{i}$,
we may assume that $d_{1}\leq d_{2}$. 

$\bullet$ Case $1$ : $d_{1}<d_{2}$. Then $e=d_{2}/d_{1}\in\mathbb{N}^{*}$
and there exists $\alpha\in K$ such that $\overline{F_{2}}=\alpha\overline{F_{1}}^{e}$.
Since $F\in TA_{2}\left(R_{j}\right)$, we have $\alpha\in R_{j}$
for every $j\in J$, and so $\alpha\in R={\displaystyle \bigcap_{j\in J}}R_{j}$.
Now the automorphism $\left(X,Y-\alpha X^{e}\right)F$ satisfies the
induction hypothesis. 

$\bullet$ Case 2 : $d_{1}=d_{2}$. Since $\overline{F_{1}}$ and
$\overline{F_{2}}$ are $K$-linearly dependent, the $R$-module $R\overline{F_{1}}+R\overline{F_{2}}$
is isomorphic to a proper ideal of $R$. As $R$ is a B\'ezout domain,
the latter is free of rank $1$, and so, we conclude from \ref{par:loc-free-obs}
above that there exists $\varphi\in{\rm Aff}_{2}\left(R\right)$ such
that $\deg\varphi F<\deg F$. \end{proof}
\begin{prop}
If $R$ is a B\'ezout domain and $F\in{\rm GA}_{2}\left(R\right)$
then there exists a smallest ring $S$ between $R$ and $K\left(R\right)$
such that $F\in{\rm TA}_{2}\left(S\right)$. Furthermore, $S$ is
a finitely generated $R$-algebra.

If we assume further that $R$ is a principal ideal domain, then there
exists $r\in R\setminus\left\{ 0\right\} $ such that $S=R\left[r^{-1}\right]$. \end{prop}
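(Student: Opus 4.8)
The plan is to deduce all three assertions from a single statement proved by induction on $\deg F$ for the product order on $(\mathbb{N}^*)^2$: \emph{for every B\'ezout domain $R$ with fraction field $K$ and every $F\in{\rm GA}_2(R)$ there is a finite set $A\subseteq K$ such that, for every ring $T$ with $R\subseteq T\subseteq K$, one has $F\in{\rm TA}_2(T)$ if and only if $A\subseteq T$.} Granting this, $S:=R[A]$ is a finitely generated $R$-algebra with $F\in{\rm TA}_2(S)$, and since $A\subseteq T$ is equivalent to $S\subseteq T$ for rings $T$ between $R$ and $K$, the equivalence exhibits $S$ as the smallest ring over which $F$ is tame, proving the existence and finite generation claims at once. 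After composing $F$ with the coordinate exchange $(Y,X)\in{\rm Aff}_2(R)$ if necessary, I may assume $d_1:=\deg F_1\le d_2:=\deg F_2$; the base case $\deg F=(1,1)$ is settled by $A=\emptyset$, as $F$ is then affine over $R$, hence tame over every $T\supseteq R$.

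For the inductive step I have $d_2>1$, and since $F\in{\rm TA}_2(K)={\rm GA}_2(K)$ by Jung--van der Kulk, Proposition \ref{pro:Tame-algo} applies over $K$. Consider first the case $d_1=d_2$. Case b)(iii) of that proposition, combined with \ref{par:loc-free-obs} over $K$, shows that $\overline{F_1}$ and $\overline{F_2}$ are $K$-linearly dependent; as their coefficients lie in $R$, the $R$-module $R\overline{F_1}+R\overline{F_2}$ has rank $1$ and, exactly as in the proof of Theorem \ref{thm:Main}, is isomorphic to a nonzero finitely generated ideal of the B\'ezout domain $R$, hence is free of rank $1$. By \ref{par:loc-free-obs} there is thus $\varphi\in{\rm Aff}_2(R)$ with $\deg\varphi F<\deg F$. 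Since $\varphi\in{\rm Aff}_2(R)\subseteq{\rm TA}_2(T)$ for all $T\supseteq R$, tameness of $F$ over $T$ is equivalent to that of $\varphi F\in{\rm GA}_2(R)$, and the inductive hypothesis applied to $\varphi F$ supplies the required set $A$ with no element adjoined.

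All constraints arise in the case $d_1<d_2$. Here part a) of Proposition \ref{pro:Tame-algo} over $K$ gives $e:=d_2/d_1\in\mathbb{N}^*$, and case b)(i) gives a unique $\alpha\in K^*$ with $\overline{F_2}=\alpha\,\overline{F_1}^{\,e}$. If $F\in{\rm TA}_2(T)$, then case b)(i) over $T$ forces $\overline{F_2}=c\,\overline{F_1}^{\,e}$ with $c\in T$, whence $c=\alpha$ by cancellation in the domain $K[X,Y]$, so $\alpha\in T$; conversely, once $\alpha\in T$ the automorphism $(X,Y-\alpha X^{e})$ lies in ${\rm TA}_2(T)$, so $F\in{\rm TA}_2(T)$ holds if and only if $F':=(X,Y-\alpha X^{e})F\in{\rm TA}_2(T)$, and $\deg F'<\deg F$. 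The key point is that the new base ring remains B\'ezout: writing $\alpha=a/b$ with $aR+bR=R$ (possible since $R$ is B\'ezout) and choosing $u,v\in R$ with $ua+vb=1$, one gets $b^{-1}=u\alpha+v\in R[\alpha]$, so $R[\alpha]=R[b^{-1}]$ is a localization of $R$ and hence again a B\'ezout domain. Applying the inductive hypothesis to $F'\in{\rm GA}_2(R[b^{-1}])$ yields a finite $A'\subseteq K$ with $F'\in{\rm TA}_2(T')\iff A'\subseteq T'$ for all $T'$ between $R[b^{-1}]$ and $K$. Since $\alpha\in T$ already forces $R[b^{-1}]=R[\alpha]\subseteq T$, combining the two equivalences gives $F\in{\rm TA}_2(T)\iff\{\alpha\}\cup A'\subseteq T$, so $A=\{\alpha\}\cup A'$ works.

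I expect the main obstacle to be precisely this preservation of the B\'ezout property under the extension $R\subseteq R[\alpha]$ in the unequal-degree case: it is what guarantees that each later equal-degree reduction can be performed over the current ring without enlarging it, so that the algorithm stops and $A$ stays finite. Once the inductive claim is established, the principal ideal refinement is immediate: each $\alpha_i\in A$ may be written $\alpha_i=a_i/b_i$ with $a_iR+b_iR=R$, giving $R[\alpha_i]=R[b_i^{-1}]$, whence $S=R[A]=R[b_1^{-1},\dots,b_m^{-1}]=R[r^{-1}]$ with $r=b_1\cdots b_m\in R\setminus\{0\}$; in fact the same coprime-representation computation shows that $S$ is a single-element localization of $R$ already for any B\'ezout domain, the principal ideal hypothesis being needed only for the form in which the statement is phrased.
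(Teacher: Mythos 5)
Your proof is correct, but it reaches the conclusion by a route that differs noticeably from the paper's. The paper gets the existence of $S$ abstractly: it takes the intersection of all rings between $R$ and $K\left(R\right)$ over which $F$ is tame, invokes Cohn's theorem that every ring between a B\'ezout domain and its fraction field is again B\'ezout, and then applies the preceding proposition on intersections of families of overrings; finite generation is merely asserted to follow ``by easy induction'' from Proposition \ref{pro:Tame-algo}, and the p.i.d.\ refinement is deduced from the presentation $S=R\left[I/r\right]$ of a finitely generated $R$-subalgebra of $K\left(R\right)$ together with the B\'ezout identity. You instead prove one explicit inductive statement --- that the rings over which $F$ is tame are exactly those containing a specific finite set $A\subseteq K$ --- which yields existence, minimality and finite generation in a single stroke and in effect carries out the ``easy induction'' the paper leaves to the reader. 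Your treatment has two further advantages: the appeal to Cohn's theorem is replaced by the elementary observation that $R\left[\alpha\right]=R\left[b^{-1}\right]$ for a coprime representation $\alpha=a/b$ (so the only overrings whose B\'ezout property you need are localizations of $R$), and the same coprime-representation computation shows that $S=R\left[r^{-1}\right]$ for a single $r$ already over any B\'ezout domain, which genuinely strengthens the last assertion of the statement (the paper proves it only for principal ideal domains, by a different argument). The cost is that your induction must be formulated uniformly over all B\'ezout base rings since the ring changes to $R\left[b^{-1}\right]$ in the unequal-degree step, and, as in the proof of Theorem \ref{thm:Main}, one should note that a coordinate swap may be needed before the product-order induction hypothesis applies; both points are handled at the same level of rigour as the paper itself, so I see no gap.
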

\begin{proof}
Any ring between $R$ and $K\left(R\right)$ is again a B\'ezout
domain \cite[Theorem 1.3]{Cohn}. Therefore, the existence of $S$
is a consequence of the previous proposition. The fact that $S$ is
finitely generated follows from Proposition \ref{pro:Tame-algo} by
easy induction. For the last assertion, since $S$ is finitely generated
over $R$, there exists a finitely generated ideal $I\subset R$ and
an element $r\in R\setminus\left\{ 0\right\} $ such that $S=R\left[I/r\right]=\left\{ a/r^{k}\in K\left(R\right),a\in I^{k},k=0,1\ldots\right\} $.
Since $R$ is a p.i.d, $I$ is a principal ideal, say generated by
an element $g\in R$. After eliminating common factors if any, we
may assume that $r$ and $g$ are relatively prime and that $S=R\left[g/r\right]\subset R\left[r^{-1}\right]$.
But by B\'ezout identity, there exists $u,v\in R$ such that $ur+vg=1$
and so, $S=R\left[r^{-1}\right]$. \end{proof}
\begin{example}
If $F\in{\rm GA}_{2}\left(\mathbb{C}\left[z\right]\right)$, then
there exists a smallest ring $S$ between $\mathbb{C}\left[z\right]$
and $\mathbb{C}\left(z\right)$ of the form $\mathbb{C}\left[z\right]\left[r^{-1}\right]$
such that $F\in{\rm GA}_{2}\left(S\right)$. 
\end{example}

\section{Examples and complements}

Here we discuss examples of domains $R$ which illustrate the property
${\rm Pic}_{2}\left(R\right)=\left\{ 1\right\} $.

\subsection{The condition  $\mathbf{{\rm \bf{Pic}}_2(R)=\{1\}}$ for $1$-dimensional
noetherian domains}

\indent\newline\noindent If $R$ is a noetherian domain of Krull
dimension $1$, every locally free $R$-module of rank $j$ is generated
by at most $j+1$ elements (see e.g. \cite[Th. 5.7]{Matsumura}).
In particular, we have ${\rm Pic}\left(R\right)={\rm Pic}_{2}\left(R\right)$
for every noetherian domain of dimension $1$. As a consequence, we
get: 
\begin{example}
If $R$ is a Dedekind domain, the following are equivalent :

(1) ${\rm Pic}_{2}\left(R\right)=\left\{ 1\right\} $; $\qquad$ (2)
${\rm Pic}\left(R\right)=\left\{ 1\right\} $; $\qquad$ (3) $R$
is a UFD; $\qquad$ (4) $R$ is a p.i.d. 
\end{example}
For the coordinate ring $R$ of an affine curve $C$ defined over
an algebraically closed field, we have the following classical result: 
\begin{prop}
\label{pro:Rat-Curve}The Picard group of $R$ is trivial if and only
if $C$ is a nonsingular rational curve. \end{prop}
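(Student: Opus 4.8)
The plan is to prove the two implications separately, splitting the nontrivial one into a smoothness statement and a rationality statement. For the easy direction, suppose $C$ is nonsingular and rational. Then its smooth projective model is $\mathbb{P}^{1}$, so $C=\mathbb{P}^{1}\setminus S$ for some nonempty finite set $S$. After an automorphism of $\mathbb{P}^{1}$ sending one point of $S$ to infinity, $C=\mathbb{A}^{1}\setminus\{a_{1},\dots,a_{m}\}$ and $R=k[t][f^{-1}]$ with $f=\prod_{i}(t-a_{i})$. Thus $R$ is a localization of the principal ideal domain $k[t]$, hence itself a principal ideal domain, and $\mathrm{Pic}(R)=\{1\}$.

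For the converse, assume $\mathrm{Pic}(R)=\{1\}$; I first prove $C$ is smooth. Let $\tilde{R}$ be the integral closure of $R$ in $K(R)$ (the coordinate ring of the normalization $\tilde{C}$) and $\mathfrak{c}\subseteq R$ the conductor, an ideal supported on the finite singular locus. The conductor square is a Milnor square, and its units--Picard exact sequence, together with the vanishing of $\mathrm{Pic}$ for the Artinian rings $R/\mathfrak{c}$ and $\tilde{R}/\mathfrak{c}$, reads
\[ \tilde{R}^{*}\oplus(R/\mathfrak{c})^{*}\longrightarrow(\tilde{R}/\mathfrak{c})^{*}\longrightarrow\mathrm{Pic}(R)\longrightarrow\mathrm{Pic}(\tilde{R})\longrightarrow 0, \]
where I write $\phi$ for the left-hand map, so that $\mathrm{coker}(\phi)$ injects into $\mathrm{Pic}(R)$. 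Suppose $C$ were singular, so $R\subsetneq\tilde{R}$ and $\delta:=\dim_{k}(\tilde{R}/R)=\dim_{k}(\tilde{R}/\mathfrak{c})-\dim_{k}(R/\mathfrak{c})>0$. Since $k$ is algebraically closed, the inclusion $R/\mathfrak{c}\hookrightarrow\tilde{R}/\mathfrak{c}$ induces an injection of unit groups whose image $H$ is a connected algebraic subgroup of $(\tilde{R}/\mathfrak{c})^{*}$ with $\dim H=\dim_{k}(R/\mathfrak{c})$; hence $G:=(\tilde{R}/\mathfrak{c})^{*}/H$ is a connected commutative algebraic group of positive dimension $\delta$. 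The image of $\tilde{R}^{*}$ is generated by the image of the constants $k^{*}$, which lies in $H$, together with the image of $\tilde{R}^{*}/k^{*}$; the latter is finitely generated, since the units of a smooth affine curve over $k$ modulo constants form a finitely generated group (a unit has divisor of degree $0$ supported at the points at infinity). Therefore $\mathrm{coker}(\phi)$ is a quotient of $G$ by a finitely generated subgroup.

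Now a connected commutative algebraic group of positive dimension over an algebraically closed field is never finitely generated as an abstract group: it admits $\mathbb{G}_{a}(k)$, $\mathbb{G}_{m}(k)$, or $A(k)$ for a simple abelian variety $A$ as a subquotient, and none of these is finitely generated. Consequently $\mathrm{coker}(\phi)\neq\{1\}$, contradicting $\mathrm{Pic}(R)=\{1\}$, so $C$ is smooth. It remains to prove rationality. Let $\bar{C}$ be the smooth projective model and $\{p_{1},\dots,p_{n}\}=\bar{C}\setminus C$, nonempty. The excision sequence for divisors gives
\[ \bigoplus_{i=1}^{n}\mathbb{Z}[p_{i}]\longrightarrow\mathrm{Pic}(\bar{C})\longrightarrow\mathrm{Pic}(C)\longrightarrow 0. \]
Splitting $\mathrm{Pic}(\bar{C})\cong\mathrm{Jac}(\bar{C})\oplus\mathbb{Z}$ by the degree map (using that $\bar{C}$ has a $k$-point) and noting that each $[p_{i}]$ has degree $1$, I obtain $\mathrm{Pic}(C)\cong\mathrm{Jac}(\bar{C})/\langle[p_{i}-p_{j}]\rangle$. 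If the genus of $\bar{C}$ were positive, $\mathrm{Jac}(\bar{C})$ would be a nonzero abelian variety over $k$, again not finitely generated, so it could not coincide with the finitely generated subgroup generated by the $[p_{i}-p_{j}]$, forcing $\mathrm{Pic}(C)\neq\{1\}$. Hence the genus is $0$, $\bar{C}=\mathbb{P}^{1}$, and $C$ is rational.

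The main point requiring care is the nontriviality of $\mathrm{coker}(\phi)$ at a singular point. One cannot argue by dimension alone, because $\mathrm{Pic}(R)$ is only an abstract group and the global units $\tilde{R}^{*}$ may map onto a Zariski-dense, even full-dimensional, part of a local factor of $(\tilde{R}/\mathfrak{c})^{*}$ --- as already happens when $\tilde{C}$ is $\mathbb{G}_{m}$, where $\mathrm{coker}(\phi)$ comes out as something like $k/\mathbb{Z}$ rather than an algebraic group. The correct mechanism is the dichotomy ``finitely generated versus not'': the singularity produces a positive-dimensional quotient $G$, both the image of $\tilde{R}^{*}/k^{*}$ and the subgroup $\langle[p_{i}-p_{j}]\rangle$ are finitely generated, and a positive-dimensional commutative algebraic group over $k$ is never finitely generated. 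Setting up the units--Picard sequence of the conductor square, and the standard finite generation of units modulo constants on a smooth affine curve, are the remaining technical inputs.
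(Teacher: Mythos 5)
Your proof is correct, but it takes a genuinely different and essentially self-contained route where the paper simply outsources both halves to the literature. The paper's proof is two citations: Wiegand's theorem that the surjection $\mathrm{Pic}(C)\to\mathrm{Pic}(\tilde{C})$ is an isomorphism if and only if $R=\tilde{R}$ (which, applied to a trivial $\mathrm{Pic}(C)$, forces smoothness), followed by the classical fact that a nonsingular affine curve has trivial Picard group if and only if it is rational. You instead reprove the special case of Wiegand's result that is needed, via the units--Picard Mayer--Vietoris sequence of the conductor square, and reprove the rationality criterion via the Abel--Jacobi description of $\mathrm{Pic}^{0}$; the details check out (the unit group of a finite-dimensional $k$-algebra is indeed a connected algebraic group of dimension equal to its $k$-dimension, $\tilde{R}^{*}/k^{*}$ is indeed finitely generated, and $\mathrm{coker}(\phi)\cong\ker(\mathrm{Pic}(R)\to\mathrm{Pic}(\tilde{R}))$ does inject into $\mathrm{Pic}(R)$). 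What your approach buys is a single unifying mechanism for both implications --- a positive-dimensional connected commutative algebraic group over an algebraically closed field is never finitely generated, hence never exhausted by a finitely generated subgroup --- valid in any characteristic, at the cost of invoking heavier general machinery (Milnor patching, Chevalley's structure theorem, the Jacobian) where the paper's version is a few lines resting on references. Your closing remark correctly identifies the one place a naive dimension count would fail (the image of $\tilde{R}^{*}$ can be Zariski-dense in a factor of $(\tilde{R}/\mathfrak{c})^{*}$), which is precisely why the finite-generation dichotomy is the right tool.
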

\begin{proof}
Let $\tilde{C}={\rm Spec}(\tilde{R})$ be the normalization of $C$. 
By virtue of \cite[Theorem 3.2]{Wiegand}, the natural surjection 
${\rm Pic}(C)\rightarrow {\rm Pic}(\tilde{C})$ is an isomorphism if and only if $R=\tilde{R}$.
Therefore, if ${\rm Pic(C)}$ is trivial, then $C$ is necessarily a nonsingular curve. 
Now it is well known that a nonsingular curve has trivial Picard group if and only if
it is rational (see e.g. \cite[§11.4 p. 261]{Ei}).  \end{proof}
\begin{cor}
Let $R$ be the coordinate ring of a rational affine curve and let
$\tilde{R}$ be its integral closure in $K\left(R\right)$. If $F\in{\rm GA}_{2}\left(R\right)$
is locally tame, then $F\in{\rm TA}_{2}\left(\tilde{R}\right)$. \end{cor}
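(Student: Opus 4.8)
The plan is to transfer the local tameness of $F$ from $R$ to its normalization $\tilde{R}$ and then apply the criterion of Theorem \ref{thm:Main} over $\tilde{R}$. First I would record that $\tilde{R}$ is the coordinate ring of the normalization $\tilde{C}$ of $C$, that $\tilde{C}$ is a nonsingular curve, and that normalization does not alter the function field, so $K\left(\tilde{R}\right)=K\left(R\right)$. Since $C$ is rational, so is $\tilde{C}$, and Proposition \ref{pro:Rat-Curve} then gives ${\rm Pic}\left(\tilde{R}\right)=\left\{1\right\}$. As ${\rm Pic}_{2}\left(\tilde{R}\right)$ is a subgroup of ${\rm Pic}\left(\tilde{R}\right)$, it is trivial as well, so property 2) of Theorem \ref{thm:Main} holds for the domain $\tilde{R}$.

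The substance of the argument is to show that $F$, which is locally tame over $R$ by hypothesis, is also locally tame over $\tilde{R}$. For this I would fix an arbitrary prime $\tilde{\mathfrak{p}}\in{\rm Spec}\left(\tilde{R}\right)$ and set $\mathfrak{p}=\tilde{\mathfrak{p}}\cap R$, which is a prime of $R$. Every element of $R\setminus\mathfrak{p}$ lies in $\tilde{R}\setminus\tilde{\mathfrak{p}}$ and hence is invertible in $\tilde{R}_{\tilde{\mathfrak{p}}}$, so inside $K\left(R\right)$ we obtain the inclusion of subrings $R_{\mathfrak{p}}\subseteq\tilde{R}_{\tilde{\mathfrak{p}}}$. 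Because tameness is preserved under extension of the base ring -- the affine and triangular factors of a decomposition over $R_{\mathfrak{p}}$ remain affine and triangular over $\tilde{R}_{\tilde{\mathfrak{p}}}$ -- the hypothesis $F\in{\rm TA}_{2}\left(R_{\mathfrak{p}}\right)$ yields $F\in{\rm TA}_{2}\left(\tilde{R}_{\tilde{\mathfrak{p}}}\right)$. As $\tilde{\mathfrak{p}}$ was arbitrary, $F$ is locally tame over $\tilde{R}$.

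Finally, I would invoke the implication 2)$\Rightarrow$1) of Theorem \ref{thm:Main} for the domain $\tilde{R}$, whose Picard condition was verified in the first step. This gives ${\rm TA}_{2}\left(\tilde{R}\right)=\bigcap_{\tilde{\mathfrak{p}}\in{\rm Spec}\left(\tilde{R}\right)}{\rm TA}_{2}\left(\tilde{R}_{\tilde{\mathfrak{p}}}\right)$, and since $F$ belongs to the right-hand side by the previous paragraph, we conclude $F\in{\rm TA}_{2}\left(\tilde{R}\right)$, as desired.

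I expect the only genuine subtlety to be the ring inclusion $R_{\mathfrak{p}}\subseteq\tilde{R}_{\tilde{\mathfrak{p}}}$ underlying the transfer of local tameness in the second paragraph, together with the remark that tameness propagates along such an extension. Once this point is secured, the triviality of ${\rm Pic}\left(\tilde{R}\right)$ via Proposition \ref{pro:Rat-Curve} and the application of Theorem \ref{thm:Main} are immediate, and the finiteness of $\tilde{R}$ as an $R$-module (so that $\tilde{C}$ is again an affine curve to which Proposition \ref{pro:Rat-Curve} applies) plays only an auxiliary role.
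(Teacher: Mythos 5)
Your proof is correct and follows essentially the same route as the paper's: transfer local tameness from $R$ to $\tilde{R}$ by enlarging each local ring, then conclude via the triviality of ${\rm Pic}(\tilde{R})$ (Proposition \ref{pro:Rat-Curve}) and Theorem \ref{thm:Main}. The only cosmetic difference is that you pass directly to $\tilde{R}_{\tilde{\mathfrak{p}}}\supseteq R_{\tilde{\mathfrak{p}}\cap R}$, whereas the paper goes through the integral closures $\tilde{\mathcal{O}}_{p}$ of the local rings of $C$; your phrasing of this step is, if anything, the cleaner one.
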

\begin{proof}
Indeed, with the notation of the previous proof, one has $F\in{\rm TA}_{2}\left(\mathcal{O}_{p}\right)$
for every $p\in C={\rm Spec}\left(R\right)$ and so $F\in{\rm TA}_{2}\left(\tilde{\mathcal{O}}_{p}\right)$
for every $p\in C$. Since $\tilde{R}=\bigcap_{p\in C}\tilde{\mathcal{O}}_{p}$,
it follows that $F$ is locally tame over $\tilde{R}$, whence tame
by virtue of Proposition \ref{pro:Rat-Curve}. \end{proof}
\begin{example}
Let $R=\mathbb{C}\left[u,v\right]/\left(v^{2}-u^{3}\right)$ be the
coordinate ring of a cuspidal rational curve $C$. Via the homomorphism
$\mathbb{C}\left[u,v\right]\rightarrow\mathbb{C}\left[t\right]$,
$\left(u,v\right)\mapsto\left(t^{2},t^{3}\right)$ we may identify
$R$ with the subring $\mathbb{C}\left[t^{2},t^{3}\right]$ of $\mathbb{C}\left[t\right]$
and the integral closure $\tilde{R}$ of $R$ with $\mathbb{C}\left[t\right]$.
For every $a\in\mathbb{C}^{*}$, we let $I_{a}=\left(t^{2}-a^{2},t^{3}-a^{3}\right)$
be the maximal ideal of the smooth point $\left(a^{2},a^{3}\right)$
of $C$. In particular, $I_{a}$ is locally principal but one checks
easily that it is not principal. So for $\left(z,w\right)=\left(t^{2}-a^{2},t^{3}-a^{3}\right)$,
any automorphism $F$ as in Proposition \ref{pro:Can-Ex} is locally
tame but not tame. On the other hand, $I_{a}\tilde{R}$ is principal, generated by $t-a$, 
and so, $F\in{\rm TA}_{2}\left(\mathbb{C}\left[t\right]\right)$. 
\end{example}

\subsection{Examples of rings with $\mathbf{{\rm \bf{Pic}}_2(R)=\{1\}}$ but $\mathbf{{\rm \bf{Pic}}(R)\neq \{1\}}$ }

\indent\newline\indent As observed above, for $1$-dimensional domains
$R$, the triviality of ${\rm Pic}_{2}\left(R\right)$ is equivalent
to the one of ${\rm Pic}\left(R\right)$. Here we give examples of
domains with ${\rm Pic}_{2}\left(R\right)=\left\{ 1\right\} $ and
${\rm Pic}\left(R\right)\neq\left\{ 1\right\} $ which are coordinate
rings of smooth affine algebraic varieties. 

\begin{parn} Let $Q$ be a smooth quadric in the complex projective
space $\mathbb{P}^{n}=\mathbb{P}^{n}_{\mathbb{C}}$, $n\geq2$, and let $U=\mathbb{P}^{n}\setminus Q$.
As is well-known, $U$ is smooth affine variety with Picard group
isomorphic to $\mathbb{Z}_{2}$, generated by the restriction to $U$
of the invertible sheaf $\mathcal{O}_{\mathbb{P}^{n}}\left(1\right)$
on $\mathbb{P}^{n}$. Letting $R_{n}=\Gamma\left(U,\mathcal{O}_{U}\right)$
and $M_{n}=\Gamma\left(U,\mathcal{O}_{\mathbb{P}^{1}}\left(1\right)\right)$,
which is a locally free $R_n$-module of rank $1$, we have the following
result. 

\end{parn}
\begin{prop}
The minimal number of generators of $M_{n}$ as an $R_{n}$-module
is $\left[n/2\right]+1$. In particular, if $n\geq4$ then ${\rm Pic}_{2}\left(R_{n}\right)=\left\{ 1\right\} $
whereas ${\rm Pic}\left(R_{n}\right)\simeq\mathbb{Z}_{2}$. \end{prop}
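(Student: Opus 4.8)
The plan is to convert the algebraic quantity ``minimal number of generators of $M_n$'' into a question about sections of a line bundle and then read it off from the topology of $U$. Write $Q=\{q=0\}$ for a nondegenerate quadratic form $q$ on $\mathbb{C}^{n+1}$, so that $U={\rm Spec}(R_n)$ is a smooth affine variety of dimension $n$ and $M_n=\Gamma(U,L)$, where $L=\mathcal{O}_{\mathbb{P}^n}(1)|_U$ generates ${\rm Pic}(U)\cong\mathbb{Z}_2$ and satisfies $L^{\otimes 2}\cong\mathcal{O}_U$. Since $U$ is affine, Nakayama's lemma shows that $s_1,\dots,s_m\in M_n$ generate $M_n$ if and only if the $s_i$ have no common zero on $U$, i.e. if and only if the $m$-tuple $(s_1,\dots,s_m)$ is a nowhere-vanishing section of $L^{\oplus m}$. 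Thus, writing $\mu(M_n)$ for the minimal number of generators, $\mu(M_n)$ is the least $m$ for which $L^{\oplus m}$ admits a nowhere-vanishing section.

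For the upper bound I would give an explicit construction, which is sharper than the general Forster bound $\mu(M_n)\le\dim R_n+1=n+1$. A section $f/q^k\in M_n$ vanishes at $[x]\in U$ exactly when $f(x)=0$, so a family of sections has no common zero on $U$ precisely when the common zero locus of the corresponding forms is contained in $Q$. A smooth quadric $Q\subset\mathbb{P}^n$ of dimension $n-1$ contains linear subspaces up to dimension $[(n-1)/2]$ (the projectivized maximal isotropic subspaces of $q$). Choosing such a $\Lambda\cong\mathbb{P}^{[(n-1)/2]}\subseteq Q$ and linear forms $\ell_1,\dots,\ell_{m_0}$ cutting it out, with $m_0=n-[(n-1)/2]=[n/2]+1$, we get $V(\ell_1,\dots,\ell_{m_0})=\Lambda\subseteq Q$. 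Hence these linear sections have no common zero on $U$ and therefore generate $M_n$, giving $\mu(M_n)\le[n/2]+1$.

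For the lower bound I would pass to topology. The key input is that (as a complex manifold) $U$ is homotopy equivalent to $\mathbb{RP}^n$: the connected étale double cover $\widetilde{U}=\{q=1\}\subset\mathbb{C}^{n+1}$, $x\mapsto[x]$, is a smooth affine quadric that deformation retracts onto the real sphere $S^n$, and the deck involution $x\mapsto-x$ restricts to the antipodal map, so $U\simeq S^n/\!\pm\,=\mathbb{RP}^n$. Under this identification $c_1(L)$ is the generator $x$ of $H^2(U;\mathbb{Z})\cong\mathbb{Z}_2$ (it is the Bockstein of the class of the double cover), and in $H^*(\mathbb{RP}^n;\mathbb{Z})$ one has $x^m\neq 0$ exactly when $2m\le n$. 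Now if $M_n$ were generated by $m\le[n/2]$ elements, then $L^{\oplus m}$ would carry a nowhere-vanishing continuous section; splitting off a trivial summand gives $L^{\oplus m}\cong\underline{\mathbb{C}}\oplus V'$ with ${\rm rank}\,V'=m-1$, forcing $c_m(L^{\oplus m})=c_1(L)^m=x^m=0$ in $H^{2m}(U;\mathbb{Z})$. Since $2m\le n$ gives $x^m\neq 0$, this is impossible, so $\mu(M_n)\ge[n/2]+1$. Combined with the previous paragraph, $\mu(M_n)=[n/2]+1$.

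The ``in particular'' statement is then immediate: for $n\ge 4$ we have $[n/2]+1\ge 3$, so $M_n$ cannot be generated by two elements; as $[M_n]$ is the only nontrivial class of ${\rm Pic}(R_n)\cong\mathbb{Z}_2$ and it is not represented by any $2$-generated module, we conclude ${\rm Pic}_2(R_n)=\{1\}$ while ${\rm Pic}(R_n)\cong\mathbb{Z}_2$. I expect the main obstacle to be the topological lower bound, namely establishing the homotopy equivalence $U\simeq\mathbb{RP}^n$ and identifying $c_1(L)$ with the generator of $H^2$ together with its exact nilpotency degree; by contrast the algebraic upper bound via a maximal isotropic linear subspace of $Q$ should be routine.
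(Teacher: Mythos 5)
Your argument is correct, and the lower bound is obtained by a genuinely different (though closely related) route. For the upper bound you and the paper do essentially the same thing: the paper exhibits the explicit linear forms $x_{2j}+ix_{2j+1}$ (plus $x_{2m}$ when $n=2m$), whose common zero locus is precisely a maximal isotropic subspace $\mathbb{P}^{[(n-1)/2]}\subset Q$, i.e.\ exactly the $\Lambda$ you invoke abstractly; both versions then conclude by the Nakayama/no-common-zero criterion. For the lower bound the paper stays with the double cover $\mathcal{Q}=\{q=1\}$ and argues elementarily: $m$ generating sections correspond to $m$ odd polynomials $s_j=a_j+ib_j$ with no common zero on $\mathcal{Q}$, hence $2m$ odd real polynomials with no common zero on $\mathcal{Q}\cap\mathbb{R}^{n+1}=\mathbb{S}^{n}$, which for $2m\leq n$ contradicts the Borsuk--Ulam theorem applied to the odd map $(a_1,\ldots,a_m,b_1,\ldots,b_m)$ (padded with a zero coordinate when $n$ is odd). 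You instead compute the homotopy type $U\simeq\mathbb{RP}^{n}$, identify $c_1(L)$ with the generator of $H^{2}(\mathbb{RP}^{n};\mathbb{Z})\cong\mathbb{Z}_2$, and use the vanishing of the top Chern class of $L^{\oplus m}$ forced by a nowhere-zero section against the fact that $c_1(L)^{m}\neq0$ for $2m\leq n$. The two obstructions are cousins --- Borsuk--Ulam is itself usually proved via the cohomology of $\mathbb{RP}^{n}$ --- but the paper's version needs no Chern classes, no identification of the homotopy type of $U$, and no verification that the algebraic bundle $L$ is topologically the complexified tautological bundle, all of which your version must supply (your identification of $c_1(L)$ as the Bockstein of the class of the double cover does handle this, but it is an extra layer of machinery). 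What your version buys in exchange is conceptual clarity: it makes the bound visibly a topological invariant, pins down the exact nilpotency degree of $c_1(L)$ as the source of the number $[n/2]+1$, and would transfer verbatim to other torsion line bundles on affine varieties with known cohomology. The deduction of ${\rm Pic}_2(R_n)=\{1\}$ for $n\geq4$ is the same in both treatments.
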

\begin{proof}
Up to the action of ${\rm PGL}_{n+1}\left(\mathbb{C}\right)$, we
may assume that $Q\subset\mathbb{P}^{n}={\rm Proj}\left(\mathbb{C}\left[x_{0},\ldots,x_{n}\right]\right)$
is the hypersurface $q=0$, where $q=x_{0}^{2}+\cdots+x_{n}^{2}\in\mathbb{C}\left[x_{0},\ldots,x_{n}\right]$.
Letting $\mathcal{Q}\subset\mathbb{A}^{n+1}={\rm Spec}\left(\mathbb{C}\left[x_{0},\ldots,x_{n}\right]\right)$
be the quadric defined by the equation $q=1$, the natural map $\mathbb{A}^{n+1}\setminus\left\{ 0\right\} \rightarrow\mathbb{P}^{n}$
restricts to an \'etale  double cover $\mathcal{Q}\rightarrow\mathbb{P}^{n}\setminus Q$
expressing the coordinate ring $R_{n}$ of $\mathbb{P}^{n}\setminus Q$
as the ring of invariant functions of $A=\mathbb{C}\left[x_{0},\ldots,x_{n}\right]/\left(q-1\right)$
for the $\mathbb{Z}_{2}$-action induced by ${\rm -id}$ on $\mathbb{A}^{n+1}$.
With this description, $\mathcal{O}_{\mathbb{P}^{n}}\left(1\right)\mid_{U}$
coincides with the trivial line bundle $\mathcal{Q}\times\mathbb{A}^{1}$
equipped with the nontrivial $\mathbb{Z}_{2}$-linearization $\mathcal{Q}\times\mathbb{A}^{1}\ni\left(x,u\right)\mapsto\left(-x,-u\right)\in\mathcal{Q}\times\mathbb{A}^{1}$
(see e.g. \cite[§1.3]{Mumford}). It follows that we may identify
regular functions on $U$ and global sections of $\mathcal{O}_{\mathbb{P}^{n}}\left(1\right)\mid_{U}$
with cosets in $A$ of even and odd polynomial functions on $\mathbb{A}^{n+1}$
respectively. 

$\bullet$ Case $1$ : $n=2m$ is even. Clearly, the $m+1$ odd polynomials
$p_{j}=x_{2j}+ix_{2j+1}$ for $0\leq j\leq m-1$ and $p_{m}=x_{2m}$
have no common zero on $\mathcal{Q}$. Therefore, the corresponding
sections of $\mathcal{O}_{\mathbb{P}^{n}}\left(1\right)\mid_{U}$
generate $M_{n}$ as an $R_{n}$-module. Let us show that $M_{n}$
cannot be generated by less than $m+1$ elements. Otherwise, we could
find in particular $m$ odd polynomial functions $s_{1},\cdots,s_{m}$
on $\mathbb{A}^{n+1}$ with no common zero on $\mathcal{Q}$. Writing
$s_{j}=a_{j}+ib_{j}$ for suitable odd polynomials $a_{j,}b_{j}\in\mathbb{R}\left[x_{0},\ldots,x_{n}\right]$,
this would imply in particular that the $n$ odd real polynomials
$a_{1},\ldots a_{m}$ and $b_{1},\ldots,b_{m}$ have no common zero
on $\mathcal{Q}\cap\mathbb{R}^{n+1}$. This is impossible. Indeed,
since $\mathcal{Q}\cap\mathbb{R}^{n+1}$ is the real $n$-sphere $\mathbb{S}^{n}$,
it follows from Borsuk-Ulam theorem that the map $\phi=\left(a_{1},\ldots,a_{m},b_{1},\ldots,b_{m}\right):\mathbb{S}^{n}\rightarrow\mathbb{R}^{n}$
takes the same value on a pair of antipodal points, hence, being odd,
vanishes on a pair of antipodal points. 

$\bullet$ Case $2$ : $n=2m+1$ is odd. One checks in a similar way
as above that the $m+1$ global sections of $\mathcal{O}_{\mathbb{P}^{1}}\left(1\right)\mid_{U}$
corresponding the odd polynomials $p_{j}=x_{2j}+ix_{2j+1}$, $0\leq j\leq m$
generate $M_{n}$ as an $R_{n}$-module. Now if $M_{n}$ was generated
by $m$ elements, then there would exists $m$ odd polynomials $s_{j}=a_{j}+ib_{j}$
as above for which the polynomials $a_{j},b_{j}\in\mathbb{R}\left[x_{0},\ldots,x_{n}\right]$,
$j=1,\ldots,m$ have no common zero on the real $n$-sphere $\mathbb{S}_{n}$.
But then the continuous map $\phi=\left(a_{1},\ldots,a_{m},b_{1},\ldots,b_{m},0\right):\mathbb{S}^{n}=\mathcal{Q}\cap\mathbb{R}^{n+1}\rightarrow\mathbb{R}^{n}$
would contradict the Borsuk-Ulam theorem. \end{proof}
\begin{rem}
An argument very similar to the one used in the proof above shows that over the subring $\tilde{R}_{n}$ of $\mathbb{R}\left[x_{0},\ldots,x_{n}\right]/\left(x_{0}^{2}+\cdots x_{n}^{2}-1\right)$
consisting of cosets of even polynomials, the module $\tilde{M}_{n}$
consisting of cosets of odd polynomials cannot be generated by less
than $n+1$ elements. This property seems to have been first observed
by Chase (unpublished). Our proof is deeply inspired by an argument
due to Gilmer \cite{Gilmer69} on a slightly different example. 
\end{rem}
\subsection{Further research}
\indent\newline\noindent One may wonder if there exists a complete characterization of obstructions
to infer global tamenes from local tameness for higher dimensional polynomial automorphisms similar to
Theorem \ref{thm:Main}. A good starting point would be to have in general an effective algorithmic way to 
recognize tame automorphisms. Unfortunately, at the present time, such an higher dimensional algorithm 
only exists for automorphisms of a polynomial ring in 3 variables over a field \cite{ShUm}.   

\bibliographystyle{amsplain}

\end{document}